\newcommand{\lb}{\varLambda}
\newcommand{\A}{\mathcal{A}}
\def \<{\langle}
\def \>{\rangle}
\newcommand{\bg}{\begin{equation}}
\newcommand{\ed}{\end{equation}}
\newcommand{\bga}{\begin{eqnarray}}
\newcommand{\eda}{\end{eqnarray}}
\newcommand{\pf}{\textbf{Proof:\ }}
\def\cbdu{\par{\raggedleft$\Box$\par}}
\newtheorem {Theorem}  {Theorem}
\numberwithin{Theorem}{section}
\newtheorem {Lemma}[Theorem]  {Lemma}
\theoremstyle{definition}
\newtheorem{Definition}[Theorem]{Definition}
\theoremstyle{remark}
\def \l{\lambda}
\chardef\csname pre amssym.def
\def\undefine#1{\let#1\undefined}
\def\newsymbol#1#2#3#4#5{\let\next@\relax
 \ifnum#2=\@ne\let\next@\msafam@\else
 \ifnum#2=\tw@\let\next@\msbfam@\fi\fi
 \mathchardef#1="#3\next@#4#5}
\def\mathhexbox@#1#2#3{\relax
 \ifmmode\mathpalette{}{\m@th\mathchar"#1#2#3}%
 \else\leavevmode\hbox{$\m@th\mathchar"#1#2#3$}\fi}
\def\hexnumber@#1{\ifcase#1 0\or 1\or 2\or 3\or 4\or 5\or 6\or 7\or 8\or
 9\or A\or B\or C\or D\or E\or F\fi}
\font\teneufm=eufm10 \font\seveneufm=eufm7 \font\fiveeufm=eufm5
\newcounter{remark}
\def \grad {\nabla}
\renewcommand{\k}{\kappa}
\renewcommand{\l}{\lambda}
\newcommand{\R}{\mathbf{R}}
\def  \R   {{\mathbb R}}
\def  \Z   {{\mathbb Z}}
\def  \T   {{\mathbb T}}
\def  \12  {{\frac{1}{2}}}
\def\build#1_#2^#3{\mathrel{\mathop{\kern 0pt#1}\limits_{#2}^{#3}}}
\begin{document}
%\currannalsline{0}{2006}

\title[Determining Modes for 3D NSE]{Determining modes for the \\ 3D Navier-Stokes equations}

\author [Alexey Cheskidov]{Alexey Cheskidov}
\address{Department of Mathematics, Stat. and Comp.Sci.,  University of Illinois Chicago, Chicago, IL 60607,USA}
\email{acheskid@uic.edu} 
\author [Mimi Dai]{Mimi Dai}
\address{Department of Applied Mathematics, Stat. and Comp.Sci.,  University of Illinois Chicago, Chicago, IL 60607,USA}
\email{mdai@uic.edu} 
\author [Landon Kavlie]{Landon Kavlie}
\address{Department of Mathematics, Stat. and Comp.Sci.,  University of Illinois Chicago, Chicago, IL 60607,USA}
\email{lkavli2@uic.edu}

\thanks{The authors were partially supported by NSF Grant
DMS--1108864 and DMS--1517583.}

%%%use \Proof instead of \begin{proof}

%%%% use \Endproof instead of \end{proof}

%%%% use \references {999} instead of \begin{thebibliography}{99}

%%%%used \Endrefs instead of \end{thebibliography}

\begin{abstract}
We introduce a determining wavenumber for the forced 3D Navier-Stokes equations (NSE)  defined for each individual solution.
Even though this wavenumber blows up if the solution blows up, its time average is uniformly bounded for all solutions on the weak global attractor.
The bound is compared to Kolmogorov's dissipation wavenumber and the Grashof constant.
 
\bigskip

KEY WORDS: Navier-Stokes equations, determining modes, global attractor.

\hspace{0.02cm}CLASSIFICATION CODE: 35Q35, 37L30.
\end{abstract}

\maketitle

\section{Introduction}

The Navier-Stokes equations (NSE) on a torus,  are given by 
\begin{equation}
  \label{nse}
  \left\{
    \begin{array}{l}
      u_t + (u \cdot \grad) u - \nu\Delta u + \grad p = f \\
      \grad \cdot u = 0,
    \end{array}
  \right.
\end{equation}
where $u$ is the velocity, $p$ is the pressure, and $f$ is the external force. We assume that
$f$ has zero mean, and consider zero mean solutions. 
%Due to the seminal work of Leray, we know that there exist solutions to these equations in a weak sense \cite{L}. In fact, in two dimensions, we know that there exist unique solutions to these equations in a classical sense with initial data in $L^2$. 

The dissipative nature of these equations is reflected in the existence of an absorbing ball in $L^2$.
Moreover, in the two-dimensional case, there exists a compact global attractor which uniformly attracts all bounded subsets of $L^2$. This attracting set is, in fact, finite dimensional, as was proven by
Foias and Temam in \cite{FT} (see also \cite{CFT}). The first result for the finite dimensionality of a two-dimensional fluid appeared in the work of Foias and Prodi \cite{FP}, where they showed that high modes of
a solution are controlled by low modes asymptotically as time goes to infinity. The number of these
low modes, called determining modes, was estimated by Foias, Manley, Temam and Treve \cite{FMTT} and later improved by Jones and Titi  \cite{JT}. See also
\cite{CJT, FJKT,FT84, FTiti} and references therein for related results.

In three dimensions the situation is drastically different as the equations have thus far eluded a proof for the existence of classical solutions. Even so, the existence of a global attractor for weak solutions is known in a weak sense \cite{FT-attractor,FMRT} (see also \cite{Se} for a related notion of a trajectory attractor). This weak global attractor consists of points on complete bounded
trajectories and attracts all bounded subsets of $L^2$ in the weak topology. 
However, it is not known whether the solutions on the attractor are regular, unless the attractor consists of a single fixed point. Neither is it known whether the attractor is compact or finite-dimensional. Similarly, the existence of a finite number of determining modes is not known in the three-dimensional case. Nevertheless, Constantin, Foias, Manley, and Temam  \cite{CFMT} showed the existence of determining modes assuming that the $H^1$ norm of solutions is uniformly bounded. 
The question whether the global attractor of the 3D NSE is bounded in $H^1$ is open and may very well require a resolution of the regularity problem. However, even assuming regularity, this would not immediately guarantee that the $H^1$ bound would depend only on the size of the force (a Grashof constant), and not on the shape of the force.

With no hope of getting a finite number of determining modes for the 3D NSE, one might ask
whether this can be done in some average sense. Indeed, the Kolmogorov 41  phenomenological 
theory of turbulence \cite{K41} predicts that the number of degrees of freedom should be of order
$\kappa_\mathrm{d}^3$, where $\kappa_\mathrm{d}$ is Kolmogorov's dissipation wavenumber.
This number is often used as the resolution needed for direct numerical simulations, so one
might ask an alternative question: What is the number of determining modes for a time discretization
of the 3D NSE and how does it depend on the force and time step?

In this paper, without making any assumptions regarding regularity properties of solutions or
bounds on the global attractor, we prove the existence of a time-dependent determining
wavenumber $\lb_u(t)$
defined for each individual solution $u$. We show that any weak solutions on the global attractor $u$ and $v$ that coincide below $\max\{\lb_u, \lb_v\}$ have to be identical. The wavenumber $\lb_u(t)$ 
blows up if and only if the solution $u(t)$ blows up. Nevertheless, the time average of this wavenumber
is uniformly bounded on the global attractor, which we estimate in terms of the Kolmogorov dissipation
number and Grashof constant.

To begin, let $u$ be a weak solution of the 3D Navier-Stokes equations. For $r\in(2,3)$ we define a local determining wavenumber
\[
\lb_{u,r}(t):=\min\{\lambda_q:\lambda_{p}^{-1+\frac 3r}\|u_p\|_{L^r}<c_r\nu ,~\forall p>q~\text{and}~ \lambda_q^{-1}\|u_{\leq q}\|_{L^\infty}<c_r\nu,~q\in \mathbb{N} \},
\]
where $c_r$ is an adimensional constant that depends only on $r$ (it will be choosen
in Section~\ref{sec:pf}).
Here $\lambda_q =\frac{2^q}{L}$, $L$ is the size of the torus, $u_q = \Delta_q u$ is the Littlewood-Paley projection of $u$, and $u_{\leq q}=\sum_{-1\leq p\leq q}u_p$ (see Section~\ref{sec:pre}). Notice that $\lb_{u,r}(t)$ might not be finite as
we adopt a convention that $\min{\emptyset}= \infty$. 

Thanks to Bernstein's inequality, we have
\[
\lb_{u,r} \geq \lb^{\mathrm{dis}}_{u}:=\min\{\lambda_q:\lambda_{p}^{-1}\|u_p\|_{L^\infty}<c_0\nu ,~\forall p>q,~q\in \mathbb{N} \},
\]
which is a local dissipation wavenumber introduced by Cheskidov and Shvydkoy in \cite{CSr}. It defines a dissipation range where a local Reynolds number corresponding
to high frequencies is small, i.e.,
\[
\mathcal{R}^h_q:= \frac{l_q \|u_q\|_{L^\infty}}{\nu} < c_0, \qquad \forall \l_q >  \lb^{\mathrm{dis}}_{u},
\]
where $l_q = \l_q^{-1}$. The dominance of the dissipation term above  $\lb^{\mathrm{dis}}_{u}$ is reflected  in improved Beale-Kato-Majda and Prodi-Serrin criteria where
$u$ is replaced with its projection on modes below  $\lb^{\mathrm{dis}}_{u}$ (see \cite{CSr}).
In particular, $\lb^{\mathrm{dis}}_{u}$  (and hence $\lb_{u,r}$) blows up if and only if the solution $u$ blows up.
 The determining wavenumber $\lb_{u,r}$
imposes tougher condition on high modes, as well as requires a control on low modes via the low frequency Reynolds number
\[
\mathcal{R}^l_q:=\frac{l_q \|u_{\leq q}\|_{L^\infty}}{\nu} < c_r, \qquad \l_q = \lb_{u,r}.
\]
It is also worth mentioning that a similar determining wavenumber is used in \cite{CD} to prove the existence of a finite number of determining modes for the surface quasi-geostrophic
equation equation in critical and subcritical cases. Even though
the determining wavenumber enjoys uniform bounds in those cases, it still proved useful to start with a time dependent wavenumber defined based on the structure of
the equation only, and then study its dependence on the force using available bounds for the global attractor.
 
We prove the following.
\begin{Theorem}\label{thm}
Let $u(t)$ and $v(t)$ be weak solutions of the 3D Navier-Stokes equations.  Let $\lb(t):=\max\{\lb_{u,r}(t), \lb_{v,r}(t)\}$ for some $r\in(2,3)$. Let $Q(t)$ be such that $\lb(t)=\lambda_{Q(t)}$.
If
\begin{equation} \label{eq:dm-condition}
u(t)_{\leq Q(t)}=v(t)_{\leq Q(t)}, \qquad \forall t>0,
\end{equation}
then
\begin{equation}\notag
\lim_{t \to \infty} \|u(t) - v(t)\|_{L^2}=0.
\end{equation}
\end{Theorem}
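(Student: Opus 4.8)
The plan is to run an $L^2$ energy estimate for the difference $w:=u-v$, using crucially that \eqref{eq:dm-condition} forces the low modes of $w$ to vanish. First I would record that, since $w(t)=w(t)_{>Q(t)}$, Bernstein's inequality gives
\[
\|\nabla w\|_{L^2}^2\ \geq\ c\,\lambda_{Q+1}^2\,\|w\|_{L^2}^2\ \geq\ c\,\lb^2\,\|w\|_{L^2}^2\ \geq\ c\,\lambda_0^2\,\|w\|_{L^2}^2 ,
\]
with $\lambda_0=1/L$ fixed (the last inequality being just Poincar\'e's inequality for the zero-mean field $w$). Next, $w$ solves $w_t-\nu\Delta w+(u\cdot\nabla)w+(w\cdot\nabla)v+\nabla\pi=0$, $\nabla\cdot w=0$; pairing with $w$ in $L^2$ and using that the transport term drops, I would obtain the balance
\[
\tfrac12\tfrac{d}{dt}\|w\|_{L^2}^2+\nu\|\nabla w\|_{L^2}^2=-\int(w\cdot\nabla)v\cdot w\,dx .
\]
(One may equally symmetrize and replace $v$ by $\tfrac12(u+v)$, which uses both $\lb_{u,r}\le\lb$ and $\lb_{v,r}\le\lb$.) The first point needing care is that weak solutions only satisfy an energy inequality a priori, so I would justify this identity either by working on the global attractor --- where trajectories are complete and bounded --- or by a Galerkin/mollification argument, with the estimates below understood in the integrated sense.

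Everything then reduces to the nonlinear term, and I would split $v=v_{\leq Q}+v_{>Q}$. For the low part, the definition of $\lb_{v,r}$ together with $\lb_{v,r}\leq\lb$ (and Bernstein's inequality on the finitely many blocks between $\lb_{v,r}$ and $\lb$) yields $\|v_{\leq Q}\|_{L^\infty}\leq C c_r\nu\lambda_Q$, hence $\|\nabla v_{\leq Q}\|_{L^\infty}\lesssim\lambda_Q\|v_{\leq Q}\|_{L^\infty}\leq C c_r\nu\lb^2$, so that
\[
\Big|\int(w\cdot\nabla)v_{\leq Q}\cdot w\,dx\Big|\ \leq\ \|w\|_{L^2}^2\,\|\nabla v_{\leq Q}\|_{L^\infty}\ \leq\ C c_r\nu\,\lb^2\|w\|_{L^2}^2\ \leq\ \tfrac{C c_r}{c}\,\nu\|\nabla w\|_{L^2}^2 ,
\]
which is absorbed into $\tfrac14\nu\|\nabla w\|_{L^2}^2$ once $c_r$ is small. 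For the high part $\int(w\cdot\nabla)v_{>Q}\cdot w\,dx$ --- which I expect to be the main obstacle --- I would integrate by parts to put the derivative on $w$ and perform a Littlewood--Paley / Bony paraproduct decomposition, organizing the frequency interactions into ``$w$--$w$ high--high'' pieces (two $w$-blocks resonating above a $v$-block) and ``high--low'' pieces. In each piece I would estimate the two $w$-factors in $L^2$, each low-frequency factor in $L^\infty$ (controlled by $\|w_{\leq p}\|_{L^\infty}\lesssim c_r\nu\lambda_p$, valid for all $p>Q$ because $w_{\leq Q}=0$ and $\|w_m\|_{L^\infty}\leq\|u_m\|_{L^\infty}+\|v_m\|_{L^\infty}\lesssim c_r\nu\lambda_m$ for $m>Q$), and every block $v_p$ in its critical norm $\|v_p\|_{L^r}\lesssim c_r\nu\lambda_p^{\,1-3/r}$, i.e.\ smallness of $v_{>Q}$ in the scaling-critical Besov space $\dot B^{\,-1+3/r}_{r,\infty}$. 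The restriction $r<3$ is essential here: the exponent $1-3/r$ is then strictly negative, which produces a genuinely decaying factor and lets the frequency sums be summed against $\nu\|\nabla w\|_{L^2}^2$ without a logarithmic loss, the remaining smallness being carried by $c_r$. The outcome I would aim for is a bound of the form
\[
\Big|\int(w\cdot\nabla)v_{>Q}\cdot w\,dx\Big|\ \leq\ C c_r\,\nu\|\nabla w\|_{L^2}^2
\]
(possibly up to a further term $C c_r\nu\lb^2\|w\|_{L^2}^2$ of the same order, which is equally absorbable by the first step).

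Finally, choosing $c_r$ small enough that all the constants in front of $\nu\|\nabla w\|_{L^2}^2$ add up to at most $\tfrac12$, the balance becomes
\[
\tfrac{d}{dt}\|w\|_{L^2}^2+\nu\|\nabla w\|_{L^2}^2\ \leq\ 0 ,
\]
and combining with $\|\nabla w\|_{L^2}^2\geq c\lambda_0^2\|w\|_{L^2}^2$ from the first step gives $\tfrac{d}{dt}\|w\|_{L^2}^2\leq -c\nu\lambda_0^2\|w\|_{L^2}^2$. Hence for any $s>0$ and $t\geq s$ one has $\|w(t)\|_{L^2}^2\leq\|w(s)\|_{L^2}^2\,e^{-c\nu\lambda_0^2(t-s)}$, which is finite since $w(s)\in L^2$; letting $t\to\infty$ proves $\|u(t)-v(t)\|_{L^2}\to 0$ (indeed exponentially fast, at a rate independent of the solutions). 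To summarize, the two delicate points are the justification of the energy identity for weak solutions and, above all, the critical trilinear estimate for the high-frequency part of the nonlinearity, which is precisely where the structure built into $\lb_{u,r}$ --- the control of the low-frequency Reynolds number together with the criticality gap $r<3$ --- is used.
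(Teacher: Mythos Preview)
Your endgame (absorb the nonlinearity into the viscosity, then Gronwall with the Poincar\'e gap $\|\nabla w\|_2^2\gtrsim\lambda_0^2\|w\|_2^2$) matches the paper exactly. The genuine gap is earlier, at the step where you drop the transport term. For two \emph{general} weak solutions there is no energy balance $\tfrac12\tfrac{d}{dt}\|w\|_2^2+\nu\|\nabla w\|_2^2=-(w\cdot\nabla v,w)$ available: deriving it would require testing the $w$-equation against $w$ itself, and the Lions--Magenes machinery needs $w_t\in L^2_tH^{-1}_x$, whereas Leray--Hopf regularity only gives $w_t\in L^{4/3}_tH^{-1}_x$. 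Neither remedy you propose closes this --- trajectories on the weak global attractor are not known to be regular, and a Galerkin/mollification limit neither respects the hypothesis $w_{\leq Q}=0$ nor upgrades the time regularity of $w_t$. The paper confronts this head-on: it tests against $\Delta_q^2 w$ (rigorous because each $w_q$ has finite Fourier support on $\T^3$) and then must estimate \emph{both}
\[
I_1=-\sum_q(\Delta_q(w\cdot\nabla v),w_q)\qquad\text{and}\qquad I_2=-\sum_q(\Delta_q(u\cdot\nabla w),w_q),
\]
since $\sum_q\Delta_q^2\neq\mathrm{Id}$ destroys the cancellation you rely on. The bound $|I_2|\lesssim c_r\nu\|\nabla w\|_2^2$ is precisely where $\lb_{u,r}$ enters (while $\lb_{v,r}$ handles $I_1$), and this is why Theorem~\ref{thm} needs $\lb=\max\{\lb_{u,r},\lb_{v,r}\}$. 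Your argument, as written, is essentially the one in Section~\ref{app}, which \emph{does} require the extra hypothesis that $v$ satisfy the energy equality.

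Your low-frequency estimate via $\|\nabla v_{\leq Q}\|_\infty\lesssim c_r\nu\lb^2$ combined with $\|\nabla w\|_2^2\gtrsim\lb^2\|w\|_2^2$ is correct and slightly slicker than the paper's handling of the corresponding piece of $I_{12}$, and your paraproduct sketch for the high-frequency part points in the right direction. But you should plan to carry out the full Bony decomposition on \emph{both} $I_1$ and $I_2$ --- three paraproduct pieces each --- rather than on one term; that, not the trilinear estimate itself, is the crux.
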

The proof of this theorem, given in Section~\ref{sec:pf}, also implies the following results:
\begin{Theorem}\label{cor}
If $u(t)$ and $v(t)$ are two Leray-Hopf solutions on the weak global attractor $\A$ such that
\begin{equation} \label{eq:dm-condition}
u(t)_{\leq Q(t)}=v(t)_{\leq Q(t)}, \qquad \forall t<0,
\end{equation}
where $Q$ is given in Theorem~\ref{thm}, then
\[
u(t) = v(t), \qquad \forall t \leq 0.
\]   
\end{Theorem}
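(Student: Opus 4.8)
The plan is to reuse the energy estimate that drives the proof of Theorem~\ref{thm}, but to run it toward $t=-\infty$ rather than $t=+\infty$, exploiting that trajectories on the weak global attractor are complete and uniformly bounded in $L^2$. Set $w=u-v$. In Section~\ref{sec:pf} one shows that, writing $Q=Q(t)$, the coincidence of the low modes $w_{\le Q}=0$ together with the defining smallness conditions of $\lb_{u,r}$ and $\lb_{v,r}$ (smallness of the high-frequency $L^r$ norms and of the low-frequency Reynolds number at $\lambda_Q$) allows one to absorb the trilinear term in
\[
\tfrac12\tfrac{d}{dt}\|w\|_{L^2}^2+\nu\|\nabla w\|_{L^2}^2=\langle (w\cdot\nabla)w,\,v\rangle
\]
into a fraction of the dissipation; since $w$ is spectrally supported in frequencies above $\lambda_Q$, Poincaré's inequality then converts the remaining dissipation into a damping term, yielding a differential inequality of the form
\[
\frac{d}{dt}\|w(t)\|_{L^2}^2\le -c\,\nu\,\lambda_{Q(t)}^2\,\|w(t)\|_{L^2}^2 .
\]
The key observation is that this inequality is purely local in time and makes no use of the initial time $t=0$, so under the present hypothesis it holds on every interval $(t_0,t)$ with $t_0<t\le 0$.

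Next I would integrate it on such an interval to get
\[
\|w(t)\|_{L^2}^2\le \|w(t_0)\|_{L^2}^2\exp\!\Big(-c\nu\int_{t_0}^{t}\lambda_{Q(\tau)}^2\,d\tau\Big)\le \|w(t_0)\|_{L^2}^2\,e^{-c\nu\lambda_0^2(t-t_0)},
\]
where $\lambda_0>0$ is the smallest Littlewood--Paley wavenumber, so that $\lambda_{Q(\tau)}\ge\lambda_0$ holds unconditionally (this is where it matters that the determining wavenumber, though possibly blowing up, is always bounded below). Because $u$ and $v$ are Leray--Hopf solutions on the weak global attractor, they are complete bounded trajectories: there is a constant $M$, depending only on $\nu$ and $f$ — an absorbing-ball radius, comparable to the Grashof constant — with $\|u(\tau)\|_{L^2},\|v(\tau)\|_{L^2}\le M$ for all $\tau\in\mathbb{R}$, hence $\|w(t_0)\|_{L^2}\le 2M$ for every $t_0$. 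Letting $t_0\to-\infty$ in the displayed bound forces $\|w(t)\|_{L^2}=0$ for all $t\le 0$, which is the conclusion.

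The one step that genuinely needs care — and which is already handled within the proof of Theorem~\ref{thm} — is the rigorous justification, for a difference of two merely weak (Leray--Hopf) solutions, of the energy identity above and of the estimate on $\langle(w\cdot\nabla)w,v\rangle$; the vanishing of the low modes of $w$ and the smallness encoded in $\lb_{u,r},\lb_{v,r}$ are exactly what render that term controllable. Granting that estimate, the passage from Theorem~\ref{thm} to Theorem~\ref{cor} is the routine modification sketched above: one simply replaces ``fix the initial time and let $t\to+\infty$'' by ``fix $t\le 0$ and let the initial time $\to-\infty$'', using the uniform-in-time $L^2$ bound that is available on the global attractor but not for general weak solutions.
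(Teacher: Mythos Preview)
Your overall strategy—reuse the exponential decay estimate $\|w(t)\|_2^2 \le \|w(t_0)\|_2^2 e^{-\nu\kappa_0^2(t-t_0)}$ obtained in Section~\ref{sec:pf}, fix $t\le 0$, and send $t_0\to-\infty$ using the uniform $L^2$ bound available for complete bounded trajectories on $\A$—is exactly the ``minor modification'' the paper has in mind, and it is correct.

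One descriptive inaccuracy is worth flagging: the energy balance you wrote, with a single trilinear term $\langle (w\cdot\nabla)w,\,v\rangle$ and an equality sign, is \emph{not} what Section~\ref{sec:pf} actually establishes. That reduced form would require the cancellation $\int_{\T^3}(u\cdot\nabla w)\cdot w\,dx=0$, which is not justified for a difference of two merely Leray--Hopf solutions; indeed, Section~\ref{app} is devoted precisely to the special situation (one solution satisfying the energy equality) in which one can get by with $\lb_{v,r}$ alone. In Section~\ref{sec:pf} the paper instead starts from the energy \emph{inequality}~\eqref{w2} and keeps \emph{both} trilinear terms $I_1=-\sum_q\int\Delta_q(w\cdot\nabla v)w_q$ and $I_2=-\sum_q\int\Delta_q(u\cdot\nabla w)w_q$, estimating $I_1$ via $\lb_{v,r}$ and $I_2$ via $\lb_{u,r}$; this is why $Q$ must be defined through $\max\{\lb_{u,r},\lb_{v,r}\}$. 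Your deferral to Section~\ref{sec:pf} ultimately covers the point, but your paraphrase of what is proved there misstates both the form of the estimate and the reason both determining wavenumbers enter.
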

This establishes the existence of determining modes for the 3D Navier-Stokes equations. It is worthwhile to note that the determining wavenumber $\lb_{u,r}$ depends on time and may
not be bounded. Actually, it is bounded if and only if $u$ is regular. However, the average determining wavenumber $\<\lb\> = \frac{1}{T}\int_{t}^{t+T} \lb_{u,r}(\tau)\, d\tau$
always enjoys a  uniform bound. Indeed, we establish the following pointwise bound:
\begin{equation} \label{pestimate}
\lb_{u,r}(t) \lesssim_r \frac{\| \nabla u(t)\|_{L^2}^2}{\nu^2}.
\end{equation}
We direct the readers to Section \ref{sec:pre} for the  $\lesssim_r$ notation. 
Note that this automatically provides a finite number of determining modes and recovers the results by Constantin, Foias, Manley, and Temam \cite{CFMT} in the 
case where $\| \nabla u(t)\|_{L^2}^2$ is bounded on the global attractor, which is known for small forces (laminar regimes). On the other hand, \eqref{pestimate} holds in
general for arbitrary forces and implies that $\<\lb\>$ is uniformly bounded for all Leray-Hopf solutions on the global attractor, i.e., complete bounded trajectories. However,
the bound \eqref{pestimate} is sharp only in the case of extreme intermittency, where on average there is only one eddy at each dyadic scale. To make a connection with Kolmogorov's
turbulence theory \cite{K41, F} we have to define an intermittency dimension and analyze $\<\lb\>$ in various intermittency regimes.

In Section~\ref{sec:Kolmogorov} and in Section~\ref{sec:grashof}, we further examine $\<\lb\>$, comparing it to Kolmogorov's dissipation wavenumber as well as the Grashof constant, defined as

\[
\kappa_\mathrm{d} := \left(\frac{\varepsilon }{\nu^3} \right)^{\frac{1}{d+1}}, \qquad G:=\frac{\|f\|_{H^{-1}}}{\nu^2 \lambda_0^{1/2}}, \qquad  \varepsilon := \lambda_0^{d}\nu\<\|\nabla u\|_{L^2}^2\>,
\]
where $d\in[0,3]$ is the intermittency dimension.This parameter is defined in Section~\ref{sec:Kolmogorov} in terms of the level of saturation of Bernstein's inequality. The
case $d=3$, where there is no intermittency and  eddies occupy the whole region, corresponds to Kolmogorov's regime. In this case the bounds read
\[
\<\lb \> \lesssim_\delta   \kappa_d^{2+\delta} \lambda_0^{-1-\delta} \lesssim_\delta \lambda_0 \left(\frac{G^2}{T\nu^2\lambda_0^2 }  +  G^2     \right)^{\frac{1}{2} + \delta}, \qquad d=3,
\]
where $\delta$ can be arbitrary small when $r$ is chosen close to $3$. On the other hand, in the case of extreme intermittency, the bounds become
\[
\<\lb \> \lesssim \kappa_d \lesssim \frac{G^2}{T\nu^2\lambda_0 }  +  \lambda_0 G^2 , \qquad d=0.
\]

 Finally, in Section~\ref{app} we show that if one of the weak solutions in Theorems~\ref{thm}, \ref{cor} satisfies the energy equality, then $\lb$ can be defined in terms of that solution only.
The energy equality holds for regular solutions, such as steady states, or solutions belonging to Onsager's space $L^3(0,\infty; B^{1/3}_{3,\infty})$ (see \cite{CCFS} where
the proof can be applied to the viscous case, where the space $B^{1/3}_{3,c_0}$ can be replaced with  $B^{1/3}_{3,\infty}$).

%\Lambda(t)=&\max \{\Lambda_u(t), \Lambda_v(t)\}, \quad \mbox { with } \lambda_Q(t)=\Lambda(t).
%Here $c_r$ is an abso&\lesssim lute constant and $r\in(2,\infty)$ which will be determined later.

\bigskip

\section{Preliminaries}
\label{sec:pre}

\subsection{Notation}
\label{sec:notation}
We denote by $A\lesssim B$ an estimate of the form $A\leq C B$ with
some absolute constant $C$, by $A\sim B$ an estimate of the form $C_1
B\leq A\leq C_2 B$ with some absolute constants $C_1$, $C_2$, and by $A\lesssim_r B$ an estimate of the form $A\leq C_r B$ with
some adimentional constant $C_r$ that depends only on the parameter $r$. $L^p$ and 
$H^s$ stand for Lebesgue and $L^2$-based Sobolev spaces respectively. To simplify
the notations we will write $\|\cdot\|_p=\|\cdot\|_{L^p}$, and use $(\cdot, \cdot)$ for the $L^2$-inner product.

\subsection{Definition of solutions}

\begin{Definition} \label{d:weaksolution}
A Leray-Hopf solution of \eqref{nse} on $[0,T]$ is a function
$u:[0,T] \to L^2(\R^3)$ in the class
\[
u \in  C_{\mathrm{w}}([0,T];L^2(\T^3)) \cap L^2_{\rm loc}([0,T];H^1(\T^3)),
\]
satisfying $\nabla \cdot u =0$ in the sense of distributions,
\[
(u(t),\varphi(t)) + \int_0^t \left[ - (u,\partial_t \varphi) + \nu (\nabla u,\nabla \varphi) + (u\cdot \nabla u, \varphi) + (f, \varphi )\right] \, ds = (u_0,\varphi(0)),
\]
for all $t\in[0,T]$ and all test functions
$\varphi \in C^\infty([0,T]\times \T^3)$ with $\nabla \cdot \varphi = 0$, and
satisfying the energy inequality
\begin{equation}\label{SEI}
{\textstyle \frac12}\|u(t)\|_2^2 \leq {\textstyle \frac12}\|u(t_0)\|_2^2 +\int_{t_0}^t
\left[- \|\nabla u(s)\|_2^2 +(f,u)\right]\, ds,
\end{equation}
for almost all $t_0 \in (0,T)$ and all $t \in [t_0,T]$.
\end{Definition}

Even though the global existence of Leray-Hopf solutions (for all $T>0$) is known
(first proved by Leray in 1934 in the case of the whole space \cite{L34}), the uniqueness remains an open problem. 

By a standard  approximation argument, the function $\varphi$ in the definition of weak solution can be taken to be only weakly Lipschitz in time (see \cite{RRS}).
This will allow us to use Littlewood-Paley projections of $u(t)$ as 
test functions.

\subsection{Littlewood-Paley decomposition}
\label{sec:LPD}
The techniques presented in this paper rely strongly on the Littlewood-Paley decomposition. Thus we here recall the Littlewood-Paley decomposition theory briefly. For a more detailed description on this theory we refer the readers to the books by Bahouri, Chemin and Danchin \cite{BCD} and Grafakos \cite{Gr}. 

We denote $\lambda_q=\frac{2^q}{L}$ for integers $q$. A nonnegative radial function $\chi\in C_0^\infty(\R^3)$ is chosen such that 
\begin{equation} \label{eq:xi}
\chi(\xi):=
\begin{cases}
1, \ \ \mbox { for } |\xi|\leq\frac{3}{4}\\
0, \ \ \mbox { for } |\xi|\geq 1.
\end{cases}
\end{equation}
Let 
\[
\varphi(\xi):=\chi(\xi/2)-\chi(\xi)
\]
and
\begin{equation}\label{eq-phiq}
\varphi_q(\xi):=
\begin{cases}
\varphi(2^{-q}\xi)  \ \ \ \mbox { for } q\geq 0,\\
\chi(\xi) \ \ \ \mbox { for } q=-1,
\end{cases}
\end{equation}
so that the sequence of $\varphi_q$ forms a dyadic partition of unity. Given a tempered distribution vector field $u$ on $\T^3 =[0,L]^3$ and $q \ge -1$, an integer, the $q$th Littlewood-Paley projection of $u$ is given by 
\begin{equation} \label{eq:Def_delta_q}
  u_q(x) := \Delta_q u(x) := \sum_{k\in\Z^3}\widehat{u}(k)\varphi_q(k)e^{i\frac{2\pi}{L} k \cdot x},
\end{equation}
where $\widehat{u}(k)$ is the $k$th Fourier coefficient of $u$.
% Here we adapt the notations $\widehat u$ and $\widecheck v$ for Fourier transform of $u$ and inverse Fourier transform of $v$, respectively.
Note that $u_{-1} = \widehat{u}(0)$. Then
\[
u=\sum_{q=-1}^\infty u_q
\]
in the distributional sense. Note that
\[
  \|u\|_{H^s} \sim \left(\sum_{q=-1}^\infty\lambda_q^{2s}\|u_q\|_2^2\right)^{1/2},
\]
for each $u \in H^s$ and $s\in\R$. To simplify the notations, we denote
\bg\notag
u_{\leq Q}:=\sum_{q=-1}^Qu_q = \sum_{k\in\Z^3}\widehat{u}(k)\chi(2^{-Q-1}k)e^{i\frac{2\pi}{L} k \cdot x}.
\ed

Next we prove the following property for the dyadic blocks.

\begin{Lemma}\label{le-dyadic}
For any $Q\geq -1$, the identity 
\[u_Q=(\Delta_Q+\Delta_{Q+1})u_{\leq Q}\]
holds.
\end{Lemma}
\pf
We denote \[\psi_Q:=\sum_{-1\leq q\leq Q}\varphi_q.\]
It follows from (\ref{eq:xi}) and (\ref{eq-phiq}) that
$\psi_Q(\xi)=\chi(2^{-Q-1}\xi)$. By \eqref{eq:Def_delta_q}, in order to prove the lemma, it is sufficient to show that
\begin{equation}\notag%\label{double}
\varphi_Q=\psi_Q(\varphi_Q+\varphi_{Q+1}).
\end{equation}
Indeed, we have from (\ref{eq:xi}) and (\ref{eq-phiq}),
\[\varphi_Q(\xi)+\varphi_{Q+1}(\xi)=\chi(2^{-Q-2}\xi)-\chi(2^{-Q}\xi).\]
Thus,
\begin{equation}\notag
\begin{split}
\psi_Q(\xi)\left[\varphi_Q(\xi)+\varphi_{Q+1}(\xi)\right]=\chi(2^{-Q-1}\xi)\left[\chi(2^{-Q-2}\xi)-\chi(2^{-Q}\xi)\right].
\end{split}
\end{equation}
It then follows from the facts  
\begin{equation}\notag
\chi(2^{-Q-1}\xi)\chi(2^{-Q-2}\xi)=\chi(2^{-Q-1}\xi), \qquad
\chi(2^{-Q-1}\xi)\chi(2^{-Q}\xi)=
\chi(2^{-Q}\xi),
\end{equation}
that 
\[\psi_Q(\xi)\left[\varphi_Q(\xi)+\varphi_{Q+1}(\xi)\right]=\chi(2^{-Q-1}\xi)-\chi(2^{-Q}\xi)=\varphi_Q(\xi),\]
which completes the proof.

\cbdu

\subsection{Bony's paraproduct, Littlewood-Paley theorem, and inequalities.}
\label{sec-para}

Here we state some useful properties for the dyadic blocks of the Littlewood-Paley decomposition and some inequalities that will be used often through the paper.
%\begin{Lemma}\label{lp}
%Assume $2\leq p<\infty$ and $0\leq \alpha\leq 2$. Then
%\begin{equation}\notag
%p\int u_q\Lambda^\alpha u_q|u_q|^{p-2} \, dx\gtrsim  \lambda_q^\alpha\|u_q\|_p^p.
%\end{equation}
%\end{Lemma}
%For a proof of Lemma \ref{lp}, see \cite{CMZ, CC}.

We start with Bony's paraproduct formula. First, note that
\[
u\cdot\nabla v = \sum_{p} u_{\leq{p-2}}\cdot\nabla v_p + \sum_{p} u_{p}\cdot\nabla v_{\leq{p-2}} + \sum_p \sum_{|p-p'|\leq 1} u_p\cdot\nabla v_{p'}.
\]
Due to \eqref{eq:xi} we have $\varphi(\xi)=0$ when $|\xi|\leq 3/4$ or $|\xi|\geq 2$, and hence 
\[
(f_q  g_{\leq q-2})_{\geq q+2} \equiv 0, \qquad (f_q  g_{\leq q-2})_{\leq q-3} \equiv 0, \qquad (f_qg_{q+1})_{\geq q+3} \equiv 0,
\]
for tempered distributions $f$ and $g$. Therefore,
\begin{equation}\notag
\begin{split}
\Delta_q(u\cdot\nabla v)=&\sum_{q-1\leq p \leq q+2}\Delta_q(u_{\leq{p-2}}\cdot\nabla v_p)+
\sum_{q-1\leq p \leq q+2}\Delta_q(u_{p}\cdot\nabla v_{\leq{p-2}})\\
&+\sum_{p\geq q-2}\sum_{\substack{|p-p'|\leq 1\\ p' \geq q-2}}\Delta_q(u_p\cdot\nabla v_{p'}).
\end{split}
\end{equation}
It is usually sufficient to use a weaker form of this formula:
\begin{equation}\notag
\begin{split}
\Delta_q(u\cdot\nabla v)=&\sum_{|q-p|\leq 2}\Delta_q(u_{\leq{p-2}}\cdot\nabla v_p)+
\sum_{|q-p|\leq 2}\Delta_q(u_{p}\cdot\nabla v_{\leq{p-2}})\\
&+\sum_{p\geq q-2} \Delta_q(\tilde u_p \cdot\nabla v_p),
\end{split}
\end{equation}
where $\tilde{u}_p := u_{p-1} + u_p + u_{p+1}$.

Finally, we will take advantage of the Littlewood-Paley Theorem, which yields
the following bounds on the Lebesgue norms:
\begin{equation} \label{eq:LPT}
\left(\sum_q \|u_q\|_r^r\right)^{\frac{1}{r}} \lesssim \|u\|_r \lesssim
\left(\sum_q \|u_q\|_2^2\right)^{\frac{1}{2}}, \qquad r\geq 2.
\end{equation}

We recall Bernstein's inequality.
\begin{Lemma}\label{le:bern}
Let $n$ be the spacial dimension and $r\geq s\geq 1$. Then for all tempered distributions $u$, 
\bg\label{Bern}
\|u_q\|_{r}\lesssim \lambda_q^{n(\frac{1}{s}-\frac{1}{r})}\|u_q\|_{s}.
\ed
\end{Lemma}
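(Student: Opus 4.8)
The plan is to exploit the frequency localization of the block $u_q=\Delta_q u$, whose Fourier support is contained in an annulus $\{|\xi|\sim\lambda_q\}$ (a ball $\{|\xi|\lesssim\lambda_0\}$ when $q=-1$), and to reduce the inequality to a convolution estimate together with a scaling computation. First I would construct a \emph{reproducing kernel}. Fix a radial function $\tilde\varphi\in C_c^\infty(\R^n)$ with $\tilde\varphi\equiv 1$ on $\supp\varphi$ (an annulus bounded away from $0$ and $\infty$) and supported in a slightly larger annulus, and set $\tilde\varphi_q(\xi)=\tilde\varphi(2^{-q}\xi)$ for $q\ge0$, with the analogous ball-supported choice for $q=-1$. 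Since $\tilde\varphi_q\equiv1$ on $\supp\varphi_q\supseteq\supp\widehat{u_q}$, the kernel $K_q$ whose symbol on $\T^n$ equals $\tilde\varphi_q$ reproduces the block, i.e. $u_q=K_q * u_q$, the convolution being taken on the torus. This is the only place where the toroidal setting enters: $K_q$ is the periodization $K_q(x)=\sum_{m\in\Z^n}\Phi_q(x+Lm)$ of the kernel $\Phi_q$ on $\R^n$ with $\widehat{\Phi_q}=\tilde\varphi_q$, where $\Phi$ is a fixed Schwartz function and $\Phi_q(x)=\lambda_q^n\Phi(\lambda_q x)$.

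Next I would apply Young's convolution inequality on $\T^n$. With $t\in[1,\infty]$ determined by $1+\frac1r=\frac1t+\frac1s$ — which is solvable precisely because $r\ge s$ forces $\frac1t=1+\frac1r-\frac1s\in[0,1]$ — one obtains
\[
\|u_q\|_r=\|K_q * u_q\|_r\le \|K_q\|_t\,\|u_q\|_s .
\]
It then remains to evaluate the scaling of $\|K_q\|_t$. A change of variables gives $\|\Phi_q\|_{L^t(\R^n)}=\lambda_q^{\,n(1-1/t)}\|\Phi\|_{L^t(\R^n)}$, and by the definition of $t$ one has $1-\frac1t=\frac1s-\frac1r$. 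Combining this with the reproducing identity yields $\|u_q\|_r\lesssim \lambda_q^{\,n(1/s-1/r)}\|u_q\|_s$, which is exactly the claimed inequality, the implicit constant being $\|\Phi\|_t$, an adimensional quantity independent of $u$ and of $q$.

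The main obstacle is the periodization step: one must verify that the $\R^n$ scaling $\|\Phi_q\|_t\sim\lambda_q^{\,n(1-1/t)}$ survives wrapping onto $\T^n$, uniformly in $q$, so that $\|K_q\|_{L^t(\T^n)}\lesssim\lambda_q^{\,n(1/s-1/r)}$. For $q\ge0$ this is immediate from the rapid decay of the Schwartz profile $\Phi$, since $\lambda_q L=2^q\ge1$ places the nonzero lattice translates $Lm$ far out in the tail of $\Phi_q$, making $\sum_{m\ne0}|\Phi_q(\cdot+Lm)|$ negligible compared with the principal term. The low block $q=-1$ is the only genuinely separate case; there the symbol is supported in a fixed ball, so the associated kernel and its $L^t$ norm are simply bounded by constants, which are absorbed into the implicit constant. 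This completes the reduction and establishes the inequality for all $q\ge-1$ and all $r\ge s\ge1$.
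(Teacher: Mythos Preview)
The paper does not actually prove this lemma; it is merely \emph{recalled} as a standard fact from Littlewood--Paley theory, with the books of Bahouri--Chemin--Danchin and Grafakos cited for background. So there is no ``paper's own proof'' to compare against.

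That said, your argument is correct and is precisely the standard proof one finds in those references: reproduce $u_q$ by convolution with a kernel whose symbol is identically $1$ on $\supp\widehat{u_q}$, apply Young's inequality with the exponent $t$ determined by $1+\tfrac1r=\tfrac1t+\tfrac1s$, and read off the $\lambda_q^{\,n(1/s-1/r)}$ factor from the scaling $\|\Phi_q\|_{L^t}=\lambda_q^{\,n(1-1/t)}\|\Phi\|_{L^t}$. Your attention to the periodic setting---controlling the periodization $K_q=\sum_{m}\Phi_q(\cdot+Lm)$ via the Schwartz decay of $\Phi$ for $q\ge0$, and handling $q=-1$ separately---is appropriate and closes what would otherwise be a small gap when transferring the $\R^n$ argument to $\T^n$. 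Nothing is missing.
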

%Essentially, expand all parts into their Littlewood-Paley pieces. Then, cancel the zero terms which have sufficiently disjoint support in Fourier space.
We will also use Jensen's inequality stated bellow.
\begin{Lemma}\label{le:bern}
Let $\varphi: I\to \R$ be a convex function on an interval $I$. For any points $x_1, x_2, ..., x_n\in I$ and weights $\theta_1, \theta_2, ..., \theta_n\in(0,1)$, the inequality  
\bg\label{Jen}
\varphi(\theta_1x_1+\theta_2x_2+\cdot\cdot\cdot+\theta_nx_n)\leq \theta_1\varphi(x_1)+\theta_2\varphi(x_2)+\cdot\cdot\cdot+\theta_n\varphi(x_n).
\ed
holds. 
\end{Lemma}

\bigskip

\section{Proof of Theorems \ref{thm} and \ref{cor}}
\label{sec:pf}

Let $w:=u-v$, which satisfies the equation
\begin{equation} \label{eq-w}
w_t+u\cdot\nabla w+w\cdot\nabla v=-\nabla \pi+\nu \Delta w
\end{equation}
in the distribution sense (see Definition~\ref{d:weaksolution}).
By our assumption, $w_{\leq Q(t)}(t)\equiv 0$.

Using $\Delta^2_qw$ as a test function for (\ref{eq-w}) and adding up for all $q\geq -1$ yields  
\begin{equation}\label{w2}
\begin{split}
&\frac{1}{2}\|w(t)\|_2^2- \frac{1}{2}\|w(t_0)\|_2^2 +\nu \int_{t_0}^t\|\nabla w\|_2^2 \, d\tau\\
\leq & \int_{t_0}^t \sum_{q\geq -1} \left| \int_{\T^3}\Delta_q(w\cdot\nabla v) w_q\, dx\right| \, d\tau
+\int_{t_0}^t\sum_{q\geq -1}\left| \int_{\T^3}\Delta_q(u\cdot\nabla w) w_q\, dx\right|\, d\tau,\\
=:&\int_{t_0}^t I_1\, d\tau+ \int_{t_0}^t I_2\, d\tau.
\end{split}
\end{equation}

 Using Bony's paraproduct mentioned in Subsection \ref{sec-para} and the triangle inequality, $I_1$ is decomposed as
\begin{equation}\notag%\label{eq:i1}
\begin{split}
I_1\leq&\sum_{q\geq -1}\sum_{|q-p|\leq 2}\left|\int_{\T^3}\Delta_q(w_{\leq{p-2}}\cdot\nabla v_p) w_q\, dx\right|\\
&+\sum_{q\geq -1}\sum_{|q-p|\leq 2}\left|\int_{\T^3}\Delta_q(w_{p}\cdot\nabla v_{\leq{p-2}})w_q\, dx\right|\\
&+\sum_{q\geq -1}\sum_{p\geq q-2} \left|\int_{\T^3}\Delta_q(\tilde w_p\cdot\nabla v_p)w_q\, dx\right|\\
=:&I_{11}+I_{12}+I_{13}.
\end{split}
\end{equation}
These terms are estimated as follows. Recall that $w_{\leq Q(t)} \equiv 0$. Then using H\"older's inequality we obtain
\[
\begin{split}
  I_{11} %&\leq \sum_{q> Q}\sum_{\substack{|q-p|\leq 2\\ p>Q}}\int_{\T^3}|\Delta_q(w_{\leq{p-1}}\cdot\nabla v_p) w_q|\, dx\\
& \leq \sum_{p\geq Q+3}\sum_{|q-p|\leq 2}\int_{\T^3}|\Delta_q(w_{\leq{p-2}}\cdot\nabla v_p) w_q|\, dx\\
 &\lesssim  \sum_{p \geq Q+3 }\sum_{|q-p|\leq 2}\|w_{\leq p-2}\|_{\frac{2r}{r-2}}\lambda_p\|v_p\|_r\|w_q\|_2.
\end{split}
\]
Now using definition of $\lb_{v,r}$, Bernstein's inequality (\ref{Bern}), Young's inequality and Jensen's inequality (\ref{Jen}), we get 
\[
\begin{split}
 I_{11}  &\lesssim  c_r\nu\sum_{p\geq Q+3}\sum_{|q-p|\leq 2}\lambda_p^{2-\frac 3r}\|w_q\|_2\sum_{p'\leq p-2}\|w_{p'}\|_{\frac{2r}{r-2}} \\
           &\lesssim  c_r\nu\sum_{q\geq Q+1}\lambda_q\|w_q\|_2\sum_{p'\leq q}\lambda_{p'}\|w_{p'}\|_2\lambda_{q-p'}^{1-\frac 3r}\\
            &\lesssim  c_r\nu\sum_{q\geq Q+1}\lambda_q^2\|w_q\|_2^2+c_r\nu\sum_{q\geq Q+1}\left(\sum_{p'\leq q}\lambda_{p'}\|w_{p'}\|_2\lambda_{q-p'}^{1-\frac 3r}\right)^2\\
             &\lesssim  c_r\nu\sum_{q\geq Q+1}\lambda_q^2\|w_q\|_2^2+c_r\nu\sum_{q\geq Q+1}\sum_{p'\leq q}\lambda_{p'}^2\|w_{p'}\|_2^2\lambda_{q-p'}^{1-\frac 3r}\\
              &\lesssim  c_r\nu\sum_{q\geq Q+1}\lambda_q^2\|w_q\|_2^2+c_r\nu\sum_{-1\leq p'\leq Q}\lambda_{p'}^2\|w_{p'}\|_2^2\sum_{q\geq Q}\lambda_{q-p'}^{1-\frac 3r}
              +c_r\nu\sum_{p'\geq Q}\lambda_{p'}^2\|w_{p'}\|_2^2\sum_{q\geq p'}\lambda_{q-p'}^{1-\frac 3r}\\
           &\lesssim  c_r\nu \|\nabla w\|_2^2,
\end{split}
\]
where we used the fact that $r<3$.

Next, we have
\[
\begin{split}
  I_{12} &\leq \sum_{p> Q}\sum_{|q-p|\leq 2}\int_{\T^3}|\Delta_q(w_p\cdot\nabla v_{\leq p-2}) w_q| \, dx\\
&\leq  \sum_{p> Q}\sum_{|q-p|\leq 2}\int_{\T^3}|\Delta_q(w_p\cdot\nabla v_{\leq Q}) w_q| \, dx+\sum_{p'> Q}\sum_{p\geq p'+2}\sum_{|q-p|\leq 2}\int_{\T^3}|\Delta_q(w_p\cdot\nabla v_{p'}) w_q| \, dx\\
           &\lesssim  \|\nabla v_{\leq Q}\|_\infty\sum_{q> Q}\sum_{|q-p|\leq 2} \|w_p\|_2\|w_q\|_2+\sum_{p'> Q}\|\nabla v_{p'}\|_\infty\sum_{p\geq p'+2}\sum_{|q-p|\leq 2}\| w_p\|_2\|w_q\|_{2}\\
           &\lesssim  \|\nabla v_{\leq Q}\|_\infty\sum_{q> Q}\|w_q\|^2_2+\sum_{p'>Q}\lambda_{p'}^{1+\frac{3}{r}}\|v_{p'}\|_r\sum_{q\geq p'}\|w_q\|^2_{2}.
\end{split}
\]
Using the definition of $\lb_{v,r}$ and changing the order of summation, we obtain
\[
\begin{split}
  I_{12}         &\lesssim  c_r\nu\sum_{q> Q}\lambda_Q\lambda_q\| w_q\|_2^2
+c_r\nu\sum_{p'> Q}\lambda_{p'}^{2}\sum_{q\geq p'}\| w_q\|_2^2\\
           &\lesssim  c_r\nu\sum_{q> Q}\lambda_q^2\| w_q\|_2^2
+c_r\nu\sum_{q> Q}\lambda_q^2\| w_q\|_2^2\sum_{Q< p'\leq q}\lambda_{q-p'}^{-2}\\
           &\lesssim  c_r\nu \|\nabla w \|_2^2,
\end{split}
\]
where $\sum_{Q< p'\leq q}\lambda_{q-p'}^{-2}\lesssim 1$ is used in the last inequality.

We will now  estimate $I_{13}$. Recall that $w_{\leq Q} \equiv 0$ and hence $\tilde w_{\leq Q-1} \equiv 0$. Since $\Delta_q$ and $\nabla$ commute, integration by parts and changing the order
of summation yield
\begin{equation}\notag
\begin{split}
I_{13}=&\sum_{q\geq -1}\sum_{p\geq q-2} \left|\int_{\T^3}\Delta_q(\tilde w_p\cdot\nabla v_p)\cdot w_q\, dx\right|\\
=& \sum_{q>Q}\sum_{p\geq q-2}\left|\int_{\T^3}\Delta_q(\tilde w_p \otimes v_{p}) :\nabla w_q\, dx\right|\\
= & \sum_{p\geq Q}\sum_{Q<q\leq p+2}\left|\int_{\T^3}\Delta_q(\tilde w_p\otimes v_{p}) :\nabla w_q \, dx\right|.\\
\end{split}
\end{equation}
Thanks to H\"older's inequality, we have
\[
\begin{split}
  I_{13} &=  \sum_{p> Q}\sum_{Q<q\leq p+2}\left|\int_{\T^3}\Delta_q(\tilde w_p\otimes v_{p}) :\nabla w_q \, dx\right|
+  \sum_{Q<q\leq Q+2}\left|\int_{\T^3}\Delta_q(\tilde w_Q\otimes v_{Q}) :\nabla w_{q} \, dx\right|\\
%&\lesssim  \sum_{q> Q}\lambda_q\|w_q\|_{\frac{2r}{r-2}}\sum_{p\geq q-1}\sum_{|p-p'|\leq 1}\|w_{p'}\|_2\|v_{p}\|_r\\
           &\lesssim  \sum_{p> Q}\|\tilde w_p\|_2\|v_{p}\|_r\sum_{Q<q\leq p+2}\lambda_q\|w_q\|_{\frac{2r}{r-2}}
+\lambda_Q\sum_{Q<q\leq Q+2} \|\tilde w_Q\|_2\|v_{Q}\|_\infty \|w_{q}\|_2.\\
%           &=  \sum_{p> Q}\sum_{|p-p'|\leq 1}\|w_{p'}\|_2\|v_{p}\|_r\sum_{Q<q\leq p+1}\lambda_q\|w_q\|_{\frac{2r}{r-2}}
%+\lambda_Q\sum_{|Q-p'|\leq 1}\|w_{p'}\|_2\|v_{Q}\|_r \|w_{Q+1}\|_{\frac{2r}{r-2}}\\
\end{split}
\]
By Lemma \ref{le-dyadic} and definition of $\Lambda_{v,r}$, 
\[
\|v_{Q}\|_\infty  =\|(\Delta_{Q}+\Delta_{Q+1})v_{\leq Q}\|_\infty \lesssim \|v_{\leq Q}\|_\infty \leq c_r\nu \l_Q.
\]
Hence,  it follows
\[
\begin{split}
I_{13}    &\lesssim c_r \nu \sum_{p>Q}\lambda_p^{1-\frac 3r}\|\tilde w_{p}\|_2\sum_{Q<q\leq p+2}\lambda_q^{1+\frac 3r}\|w_q\|_2
+c_r\nu\lambda_Q^2\sum_{Q<q\leq Q+2} \|\tilde w_Q\|_2 \|w_q\|_2\\
           &\lesssim c_r \nu\sum_{p> Q-1}\lambda_p\|w_p\|_2\sum_{Q<q\leq p+2}\lambda_q\|w_q\|_2\lambda_{q-p}^{\frac 3r}
+c_r\nu\lambda_Q^2\sum_{Q<q\leq Q+2}\|w_q\|^2_2.
\end{split}
\]
Then we apply Young's inequality and Jensen's inequality (\ref{Jen}) to deduce
\begin{equation}\notag
\begin{split}
I_{13}\lesssim & c_r \nu\sum_{p> Q-1}\lambda_p^2\|w_p\|_2^2+ c_r \nu\sum_{p> Q-1}\left(\sum_{Q<q\leq p+2}\lambda_q\|w_q\|_2\lambda_{q-p}^{\frac 3r}\right)^2\\
&+c_r\nu\lambda_Q^2\sum_{Q<q\leq Q+2}\|w_q\|^2_2\\
\lesssim & c_r \nu\sum_{p> Q-1}\lambda_p^2\|w_p\|_2^2+ c_r \nu\sum_{p> Q-1}\sum_{Q<q\leq p+2}\lambda_q^2\|w_q\|_2^2\lambda_{q-p}^{\frac 3r}\\
&+c_r\nu\lambda_Q^2\sum_{Q<q\leq Q+2}\|w_q\|^2_2\\
\lesssim & c_r \nu\sum_{p> Q-1}\lambda_p^2\|w_p\|_2^2+ c_r \nu\sum_{q>Q}\lambda_q^2\|w_q\|_2^2\sum_{p\geq q-2}\lambda_{q-p}^{\frac 3r}\\
&+c_r\nu\lambda_Q^2\sum_{Q<q\leq Q+2}\|w_q\|^2_2\\
\lesssim&c_r \nu \|\nabla w\|_2^2.
\end{split}
\end{equation}
Combining the estimates above leads to  
\begin{equation}\label{est-i1}
I_1\lesssim c_r \nu \|\nabla w\|_2^2.
\end{equation}

To estimate $I_2$, we start with the decomposition
\begin{equation}\notag%\label{eq:i2}
\begin{split}
I_2\leq&\sum_{q\geq -1}\sum_{|q-p|\leq 2}\left|\int_{\T^3}\Delta_q(u_{\leq{p-2}}\cdot\nabla w_p)w_q\, dx\right|\\
&+\sum_{q\geq -1}\sum_{|q-p|\leq 2}\left|\int_{\T^3}\Delta_q(u_{p}\cdot\nabla w_{\leq{p-2}})w_q\, dx\right|\\
&+\sum_{q\geq -1}\sum_{p\geq q-2}\sum_{|p-p'|\leq 1}\left|\int_{\T^3}\Delta_q(u_p\cdot\nabla w_{p'})w_q\, dx\right|\\
=:& I_{21}+I_{22}+I_{23}.
\end{split}
\end{equation}
Then we estimate the terms as follows:
\[
\begin{split}
  I_{21} &=  \sum_{p> Q}\sum_{|q-p|\leq 2}\left|\int_{\T^3}\Delta_q(u_{\leq{p-2}}\cdot\nabla w_p)w_q\, dx\right|\\
  &\leq\sum_{p >Q}\sum_{|q-p|\leq 2}\int_{\T^3}|\Delta_q(u_{\leq Q}\cdot\nabla w_p)w_q|\, dx
  +\sum_{p'>Q}\sum_{p\geq p'+2}\sum_{|q-p|\leq 2}\int_{\T^3}|\Delta_q(u_{p'}\cdot\nabla w_p) w_q|\, dx\\
 &\lesssim  \|u_{\leq Q}\|_\infty \sum_{p>  Q} \sum_{|q-p|\leq 2} \lambda_p \|w_p\|_2\|w_q\|_2  
 +\sum_{p'> Q}\|u_{p'}\|_\infty\sum_{p\geq p'+2}\sum_{|q-p|\leq 2}\lambda_p\|w_p\|_2\|w_q\|_{2}\\
           &\lesssim  \|u_{\leq Q}\|_\infty\sum_{q> Q}\lambda_q \|w_q\|^2_2+\sum_{p'>Q}\lambda_{p'}^{\frac{3}{r}}\|u_{p'}\|_r\sum_{q\geq p'}\lambda_q \|w_q\|^2_{2}.
\end{split}
\]
Using the definition of $\lb_{u,r}$ and changing the order of summation, we arrive at
\[
\begin{split}
  I_{21}         &\lesssim  c_r\nu\sum_{q> Q}\lambda_Q\lambda_q\| w_q\|_2^2
+c_r\nu\sum_{p'> Q}\lambda_{p'}\sum_{q\geq p'}\lambda_q \| w_q\|_2^2\\
           &\lesssim  c_r\nu\sum_{q> Q}\lambda_q^2\| w_q\|_2^2
+c_r\nu\sum_{q> Q}\lambda_q^2\| w_q\|_2^2\sum_{Q< p'\leq q}\lambda_{q-p'}^{-1}\\
&\lesssim  c_r\nu\|\nabla w\|_2^2,
\end{split}
\]
where we used $\sum_{Q< p'\leq q}\lambda_{q-p'}^{-1}\lesssim 1$ for the last inequality.

We proceed to estimating the second term. First, H\"older's inequality and Bernstein's inequality (\ref{Bern}) yield
\[
\begin{split}
  I_{22} &= \sum_{q>Q}\sum_{|q-p|\leq 2}\left|\int_{\T^3}\Delta_q(u_{p}\cdot\nabla w_{\leq{p-2}})w_q\, dx\right|\\
&\lesssim  \sum_{q>Q}\sum_{|q-p|\leq 2}\|u_p\|_r\|w_q\|_2\sum_{p'\leq q-2}\lambda_{p'}\|w_{p'}\|_{\frac{2r}{r-2}}\\
           &\lesssim  c_r\nu \sum_{q>Q}\lambda_q^{1-\frac 3r}\|w_q\|_2\sum_{p'\leq q-2}\lambda_{p'}^{1+\frac 3r}\|w_{p'}\|_2\\
           &\lesssim  c_r\nu \sum_{q>Q}\lambda_q\|w_q\|_2\sum_{p'\leq q-2}\lambda_{p'}\|w_{p'}\|_2\lambda_{q-p'}^{-\frac 3r}.
\end{split}
\]
We continue by using Young's inequality, Jensen's inequality (\ref{Jen}), and changing the order of summation to obtain
\begin{equation}\notag
\begin{split}
I_{22}\lesssim & c_r\nu \sum_{q>Q}\lambda_q^2\|w_q\|_2^2+c_r\nu \sum_{q>Q}\left(\sum_{p'\leq q-2}\lambda_{p'}\|w_{p'}\|_2\lambda_{q-p'}^{-\frac 3r}\right)^2\\
\lesssim & c_r\nu \sum_{q>Q}\lambda_q^2\|w_q\|_2^2+c_r\nu \sum_{q>Q}\sum_{p'\leq q-2}\lambda_{p'}^2\|w_{p'}\|_2^2\lambda_{q-p'}^{-\frac 3r}\\
\lesssim & c_r\nu \sum_{q>Q}\lambda_q^2\|w_q\|_2^2+c_r\nu \sum_{p'\geq -1}\lambda_{p'}^2\|w_{p'}\|_2^2\sum_{q\geq p'-2}\lambda_{q-p'}^{-\frac 3r}\\
\lesssim & c_r \nu \|\nabla w\|_2^2.
\end{split}
\end{equation}

The last term $I_{23}$ is similar to $I_{13}$:
\[
\begin{split}
 I_{23} &= \sum_{q>Q}\sum_{p\geq q-2}\left|\int_{\T^3}\Delta_q(u_p\otimes \tilde w_p) \nabla w_q \, dx\right|\\
%&=  \sum_{p\geq Q}\sum_{Q<q\leq p+2}\int_{\T^3}|\Delta_q(u_p \otimes \tilde w_p) \nabla w_q| \, dx\\
&\leq  \sum_{p> Q}\sum_{Q<q\leq p+2}\int_{\T^3}|\Delta_q(u_p \otimes\tilde w_p) \nabla w_q| \, dx
+  \sum_{Q<q\leq Q+2}\int_{\T^3}|\Delta_q(u_Q \otimes \tilde w_Q) \nabla w_{q}| \, dx\\
%&\lesssim  \sum_{q> Q}\lambda_q\|w_q\|_{\frac{2r}{r-2}}\sum_{p\geq q-1}\sum_{|p-p'|\leq 1}\|w_{p'}\|_2\|v_{p}\|_r\\
           &\lesssim  \sum_{p> Q}\|u_{p}\|_r\|\tilde w_p\|_2\sum_{Q<q\leq p+2}\lambda_q\|w_q\|_{\frac{2r}{r-2}}
+\lambda_Q\sum_{Q<q\leq Q+2} \|u_{Q}\|_\infty \|\tilde w_Q\|_2\|w_{q}\|_2\\
&\lesssim c_r \nu \sum_{p>Q}\lambda_p^{1-\frac 3r}\|\tilde w_{p}\|_2\sum_{Q<q\leq p+2}\lambda_q^{1+\frac 3r}\|w_q\|_2
+c_r\nu\lambda_Q^2\sum_{Q<q\leq Q+2} \|\tilde w_Q\|_2 \|w_q\|_2\\
           &\lesssim c_r \nu\sum_{p> Q}\lambda_p\|\tilde w_p\|_2\sum_{Q<q\leq p+2}\lambda_q\|w_q\|_2\lambda_{q-p}^{\frac 3r}
+c_r\nu\lambda_Q^2\sum_{Q<q\leq Q+2}\|w_q\|^2_2\\
           &\lesssim c_r \nu \|\nabla w\|_2^2.
\end{split}
\]

Therefore, we have 
\begin{equation}\label{est-i2}
I_2\lesssim c_r \nu \|\nabla w\|_2^2.
\end{equation}

Combining \eqref{est-i1} and \eqref{est-i2}, we conclude that there exists an adimensional constant $C>0$ that depends only on $r$, such that
\[
I_1+I_2 \leq C c_r \nu \|\nabla w\|_2^2.
\]
Choosing $c_r:=1/(2C)$ we infer from \eqref{w2} that
\[
\begin{split}
\|w(t)\|_2^2 \leq & \|w(t_0)\|_2^2 - \nu \int_{t_0}^t \|\nabla w\|_2^2 \, d\tau\\
\leq & \|w(t_0)\|_2^2 - \nu \k_0^2 \int_{t_0}^t \|w\|_2^2 \, d\tau,
\end{split}
\]
where $\k_0=\frac{2\pi}{L}$.
Thus 
\[
\|w(t)\|_2^2 \leq \|w(t_0)\|_2^2e^{-\nu\k_0^2(t-t_0)}.
\]
Taking the limit as $t\to \infty$ concludes the proof of Theorem~\ref{thm}, while taking the limit as $t_0 \to -\infty$
concludes the proof of Theorem~\ref{cor}.

\cbdu

\section{Explicit estimates of the average determining wavenumber in terms of Kolmogorov's dissipation wavenumber}
\label{sec:Kolmogorov}

In this section we show that the average determining
wavenumber has a uniform upper bound. Recall that 
\[
\lb_{u,r}(t):=\min\{\lambda_q:\lambda_{p}^{-1+\frac 3r}\|u_p\|_{L^r}<c_r\nu ,~\forall p>q~\text{and}~ \lambda_q^{-1}\|u_{\leq q}\|_{L^\infty}<c_r\nu,~q\in \mathbb{N} \},
\]
where $r \in(2,3)$ and $c_r$ is an adimensional constant that depends only on $r$. To simplify the notations, denote $\lb:=\lb_{u,r}$ and let $Q $ be such that $\lambda_Q = \lb$.

We start with the following observation that will be used to estimate $\lb$.

\begin{Lemma} \label{l:mainbound}
If $\l_0 < \lb < \infty$, then
\[
\lb \leq (c_r\nu)^{-1} \max\{\lb^{\frac3r}\|u_{Q}\|_r, \ 2  \|u_{\leq Q-1}\|_r \}.
\]
If $\lb=\infty$, then $\|u\|_{H^{\frac12}} = \infty$.
\end{Lemma}
\begin{proof}
First consider the case where $\lb$ is finite. 
If both conditions
in the definition of $\lb$ are satisfied for all $q \geq 0$, then $\lb = \l_0$. If $\lb > \l_0$, then both conditions in the definition of $\lb$ are satisfied for
$q=Q$, but
one of the conditions is not satisfied for $q=Q-1$, i.e.,
\[
\l_Q^{-1+\frac3r}\|u_{Q}\|_r \geq c_r\nu, \qquad \text{or} \qquad \|u_{\leq Q-1}\|_\infty \geq c_r \nu\lambda_{Q-1}={\textstyle \frac{1}{2}}c_r \nu\lb,
\]
and the conclusion of the lemma holds. 

On the other hand, if $\lb= \infty$, then for every
$q \in \mathbb{N}$ either
\[
\sup_{p>q} \l_q^{-1+\frac3r}\|u_p\|_r \geq c_r\nu, \qquad \text{or} \qquad \l_q^{-1}\|u_{\leq q}\|_\infty \geq c_r \nu.
\]
If
\[
\limsup_{p \to \infty} \l_q^{-1+\frac3r}\|u_p\|_r \geq c_r \nu,
\]
then due to Bernstein's inequality,
\[
\|u\|_{H^{\frac12}} \sim \sum_q\l_q\|u_q\|^2_2 \gtrsim\sum_q\l_q^{-2+\frac6r}\|u_q\|^2_r=\infty,
\]
and we are done. Otherwise, there exists $q^{**}$
such that
\[
\l_q^{-1}\|u_{\leq q}\|_\infty \geq c_r \nu, \qquad \forall q\geq q^{**}.
\]
Now if $\|u\|_{H^{\frac12}} < \infty$, then for any $\epsilon >0$ there exists $q^*\geq q^{**}$ such that $\|u_{> q^*}\|_{H^{\frac12}}\leq\epsilon$. Then for $q\geq q^*$ we use Bernstein's inequality,
Jensen's inequality, and \eqref{eq:LPT} to obtain
\[
\begin{split}
\l_q^{-1}\|u_{\leq q}\|_\infty &\leq \sum_{p\leq q} \l_q^{-1} 
\l_p^{\frac32}\|u_p\|_2\\
&\lesssim \sum_{p\leq q^*\leq q} \l_{q*-q}^{\frac12}\l_{p-q}^{\frac12} \l_p^{\frac12}\|u_p\|_2 + \sum_{q^*<p\leq q} \l_{p-q} \l_p^{\frac12} \|u_p\|_2\\
&\lesssim \l_{q^*-q}^{\frac12}\left(\sum_{p\leq q^*\leq q} \l_{p-q}^{\frac12} \l_p\|u_p\|_2^2\right)^{\frac{1}{2}}
+ \left(\sum_{q^*<p\leq q} \l_{p-q} \l_p\|u_p\|_2^2\right)^{\frac{1}{2}}
\\&\lesssim \l_{q^*-q}^{\frac12}\|u_{\leq q^*}\|_{H^{\frac12}} + \|u_{>q^*}\|_{H^{\frac12}}\\
\\&\leq \l_{q^*-q}^{\frac12}\|u_{\leq q^*}\|_{H^{\frac12}} + \epsilon.
\end{split}
\]
Passing to the limit as $q\to \infty$ in the above inequality leads to 
\[c_r\nu\leq \epsilon,\]
for arbitrarily small $\epsilon>0$, which is a contradiction. Thus we conclude that $\|u\|_{H^{\frac12}}=\infty$.
%Since the left  hand side is larger than or equal to $c_r\nu$, we can choose $\epsilon$ small enough so that
%\[c_r \nu \lesssim \l_{q^*-q}\|u_{\leq q^*}\|_{H^{\frac12}}, \qquad q\geq q^*, \]
%which implies that $\|u\|_{H^{\frac12}}=\infty$.
\end{proof}

As the first consequence, we will show that $\lb(t)$ is
locally integrable for every Leray-Hopf solution $u(t)$.

\begin{Lemma} \label{l:simplebound}
The determining wavenumber $\lb$ enjoys the following bound:
\[
\lb - \l_0 \lesssim \frac{\|\nabla u\|_2^2}{\nu^2}.
\]
\end{Lemma}
\begin{proof}
If $\lb$ is infinite then $\|\nabla u\|_2$ is also infinite thanks to Lemma~\ref{l:mainbound}. Now consider the case where $\l_0<\lb<\infty$. Then according to Lemma~\ref{l:mainbound},
\[
\lb \leq (c_r\nu)^{-1}\lb^{\frac3r} \|u_{Q}\|_r, \qquad \text{or} \qquad \lb \leq 2(c_r\nu)^{-1} \|u_{\leq Q-1}\|_\infty.
\]
In the first case we use Bernstein's inequality to obtain
\[
\begin{split}
\lb &\leq (c_r\nu)^{-2} \lb^{\frac6r-1} \|u_{Q}\|^2_r\\
&\lesssim (c_r\nu)^{-2} \lb^2 \|u_{Q}\|^2_2 \\
&\lesssim (c_r\nu)^{-2} \| \nabla u\|_2^2.
\end{split}
\]
In the second case,
\[
\begin{split}
\lb &\leq 4(c_r\nu)^{-2} \lb^{-1} \|u_{\leq Q-1}\|^2_\infty \\
&\lesssim (c_r\nu)^{-2} \lb^{-1}\left(\sum_{q<Q}\|u_q\|_\infty \right)^2 \\
&\lesssim (c_r\nu)^{-2} \lb^{-1}\left(\sum_{q<Q}\lambda_q^{\frac{3}{2}}\|u_q\|_2 \right)^2\\
&= (c_r\nu)^{-2} \left(\sum_{q<Q}\lambda_q\|u_q\|_2 2^{\frac{1}{2}(q-Q)} \right)^2\\
&\lesssim (c_r\nu)^{-2} \sum_{q<Q}\lambda_q^2\|u_q\|_2^2 2^{\frac{1}{2}(q-Q)}\\
&\lesssim (c_r\nu)^{-2} \| \nabla u\|_2^2,
\end{split}
\] 
and the conclusion of the lemma holds again.

\end{proof}

We can now compare the average determining wavenumber with Kolmogorov's dissipation wavenumber,  often defined as
\[
\kappa_\mathrm{d} := \left(\frac{\varepsilon }{\nu^3} \right)^{\frac{1}{4}},
\]
where $\varepsilon$ is the average energy dissipation rate
\begin{equation} \label{eq:kd-classical}
\varepsilon := \frac{\nu}{L^3} \<\|\nabla u\|_2^2\> = \frac{\nu}{TL^3}\int_t^{t+T} \|\nabla u(\tau)\|_2^2 \, d\tau,
\end{equation}
and $\<\cdot \>$ denotes the time average. Then Lemma~\ref{l:simplebound} yields the following bound:
\begin{equation} \label{eq:first-bound-on-average-lambda}
\<\lb\>-\l_0 \lesssim \frac{\<\|\nabla u\|_2^2\>}{\nu^2} = \frac{\varepsilon}{\nu^3}L^3 = \kappa_\mathrm{d}^4 \l_0^{-3}.
\end{equation}
However, in this argument we used Bernstein's inequalities that might not be sharp in a turbulent regime. The level of saturation of Bernstein's inequalities is measured by a parameter $d$, called
the intermittency dimension (see \cite{CSint} where the notions of active regions, eddies, and intermittency are defined mathematically). The number of eddies at scale $\ell$ grows as
\[
\text{Number of eddies} \sim \left(\frac{L}{\ell}\right)^{d}.
\]
The case $d=3$ corresponds to Kolmogorov's regime
where eddies occupy the whole region for each scale in the inertial range. The other extreme
case is $d=0$, where the number of eddies is of order one on all the scales, in which case
Bernstein's inequalities are sharp.  A recent DNS performed by Kaneda et al. \cite{kaneda}  on the Earth Simulator suggests that $d \approx 2.7$.
The presence of intermittency requires the following modification of the classical definition \eqref{eq:kd-classical}:
\begin{equation} \label{eq:kdeps-inermit}
\kappa_\mathrm{d} := \left(\frac{\varepsilon }{\nu^3} \right)^{\frac{1}{d+1}}, \qquad  \varepsilon := \frac{\nu}{L^d}\<\|\nabla u\|_2^2\>,
\end{equation}
where $d\in[0,3]$ is the intermittency dimension. This parameter is chosen  so that
\begin{equation} \label{eq:intermdef}
\left<\sum_{q\leq Q}\l_q^{-1+\frac{6}{r}+d(1-\frac{2}{r})} \|u_q\|_r^2 \right> \lesssim \l_0^{d(1-\frac{2}{r})}\left<\sum_{q\leq Q}\l_q^{2} \|u_q\|_2^2 \right>.
\end{equation}
This can be done since by Bernstein's inequality
\[
\l_0^{3-\frac{6}{r}}\l_q^{-1+\frac{6}{r}} \|u_q\|_2^2 \lesssim  \l_q^{-1+\frac{6}{r}} \|u_q\|_r^2 \lesssim \l_q^{2} \|u_q\|_2^2.
\]
%Note that the constant in \eqref{eq:intermdef} and what follows depends on $L$ whenever $d>0$. The power of $L$ can be easily determined and is therefore omitted for simplicity. 
%The intermittency dimension $d$ measures a level of saturation of Bernsten's inequality (see \cite{CSr,CSint} for similar definitions). 
Note that $-1+d(1-\frac{2}{r}) <0$ since $d \leq 3$ and $r< 3$. Therefore,
\begin{equation} \label{eq:estonaver111}
\begin{split}
\lb^{-1+d(1-\frac{2}{r})} \|u_{\leq Q-1}\|_\infty^2 &\lesssim
\lb^{-1+d(1-\frac{2}{r})} \left(\sum_{q<Q} \|u_q\|_\infty\right)^2 \\
&\lesssim \lb^{-1+d(1-\frac{2}{r})} \left(\sum_{q<Q} \l_q^{\frac{3}{r}}\|u_q\|_r\right)^2\\
&=  \left(\sum_{q<Q} \l_q^{-\frac{1}{2}+\frac{3}{r}+\frac{d}{2}(1-\frac{2}{r})}\|u_q\|_r
(L\l_{Q-q})^{-\frac12+\frac d2(1-\frac{2}{r})} \right)^2\\
&\lesssim_r \sum_{q<Q} \l_q^{-1+\frac{6}{r}+d(1-\frac{2}{r})}\|u_q\|_r^2.
\end{split}
\end{equation}
Now, thanks to Lemma~\ref{l:mainbound}, we have
\[
\lb^{-1+\frac{6}{r}+d(1-\frac{2}{r})} \|u_Q\|_r^2 \geq (c_r\nu)^2 \lb^{1+d(1-\frac{2}{r}) }
\]
or
\[
4\lb^{-1+d(1-\frac{2}{r})} \|u_{\leq Q-1}\|_\infty^2 \geq (c_r \nu)^2\lb^{1+d(1-\frac{2}{r})},
\]
provided $\l_0<\lb<\infty$. Therefore,
\[
(c_r \nu)^2\lb^{1+d(1-\frac{2}{r})} \leq \lb^{-1+\frac{6}{r}+d(1-\frac{2}{r})} \|u_Q\|_r^2 + 4\lb^{-1+d(1-\frac{2}{r})} \|u_{\leq Q-1}\|_\infty^2,
\]
whenever $\l_0<\lb<\infty$.
Combining this with \eqref{eq:estonaver111} we obtain
\begin{equation} \label{eq:d<3andnoQ}
\begin{split}
(c_r \nu)^2\lb^{1+d(1-\frac{2}{r})} &\lesssim_r \lb^{-1+\frac{6}{r}+d(1-\frac{2}{r})} \|u_Q\|_r^2 + \sum_{q<Q} \l_q^{-1+\frac{6}{r}+d(1-\frac{2}{r})}\|u_q\|_r^2\\
&=\sum_{q\leq Q} \l_q^{-1+\frac{6}{r}+d(1-\frac{2}{r})}\|u_q\|_r^2.
\end{split}
\end{equation}
Note that when $\lb = \infty$ the bound \eqref{eq:d<3andnoQ}
holds as well since both sides in these inequalities are
infinite due to Lemma~\ref{l:mainbound}.
Taking the time average of \eqref{eq:d<3andnoQ} and using \eqref{eq:intermdef}, we arrive at
\[
\begin{split}
(c_r \nu)^2\left<\lb^{1+d(1-\frac{2}{r})}\right>_{>\l_0} &\lesssim_r
\left<\sum_{q\leq Q} \l_q^{-1+\frac{6}{r}+d(1-\frac{2}{r})}\|u_q\|_r^2\right>_{>\l_0}\\
& \lesssim  \l_0^{d(1-\frac{2}{r})}\left<\sum_{q\leq Q}\l_q^{2} \|u_q\|_2^2 \right>_{>\l_0}\\
& \lesssim  \l_0^{d(1-\frac{2}{r})}\<\|\nabla u\|_2^2\>\\
&=:  \l_0^{-\frac{2d}{r}}\frac{\varepsilon}{\nu},
\end{split}
\]
where $\< g \>_{>\l_0} = \< 1_{g>\l_0} g \>$.
Finally, using Jensen's inequality,
\begin{equation} \label{eq:bound-on-lambda-using-intermit}
\begin{split}
\<\lb \>-\l_0= \<\lb \>_{>\l_o}&=  \left(\frac{(c_r \nu)^2\<\lb\>_{>\l_0}^{1+d(1-\frac{2}{r})}}{(c_r\nu)^2} \right)^{\frac{1}{1+d(1-\frac{2}{r})}}\\
&\lesssim  \left(\frac{(c_r \nu)^2\<\lb^{1+d(1-\frac{2}{r})}\>_{>\l_0}}{(c_r\nu)^2} \right)^{\frac{1}{1+d(1-\frac{2}{r})}}\\
&\lesssim_r  \left(\frac{\varepsilon }{\nu^3} \right)^{\frac{1}{1+d(1-\frac{2}{r})}} \l_0^{-\frac{2d}{r+d(r-2)}}.\\
\end{split}
\end{equation}
Comparing it with Kolmogorov's dissipation wavenumber \eqref{eq:kdeps-inermit}, we get
\[
\<\lb \>-\l_0 \lesssim_r \kappa_\mathrm{d}^\frac{1+d}{1+d(1-\frac{2}{r})}\l_0^{-\frac{2d}{r+d(r-2)}}.
\]
In case of extreme intermittency $d=0$ the powers do not depend on $r$, so we can just choose $r=5/2$ inferring that 
the average determining wavenumber is bounded by  Kolmogorov's dissipation wavenumber $\kappa_d$ :
\[
\<\lb \>-\l_0\lesssim  \frac{\varepsilon }{\nu^3}=\kappa_\mathrm{d} , \qquad \text{when} \qquad d=0.
\]
 However, in Kolmogorov's regime $d=3$, the bound becomes $\kappa_\mathrm{d}^{2+}$ since $r$ can only be taken less than $3$. More precisely, for any $d\leq 3$ we
have the following bound:
\[
\<\lb \>-\l_0 \lesssim_r \kappa_\mathrm{d}^\frac{2r}{2r- 3}\l_0^{-\frac{3}{2r-3}} =\kappa_\mathrm{d}^{2+\delta}\l_0^{-1-\delta},
\]
where $\delta=\frac{6-2r}{2r-3}$ can be arbitrarily small when $r$ is chosen close to $3$.

\section{Explicit estimates of the average determining wavenumber in terms of the Grashof number}
\label{sec:grashof}
It is well known that the 3D Navier-Stokes equation possesses an absorbing ball in $L^2$
\[
B:= \{ u\in L^2(\T^3): \|u\|_2 \leq R \},
\]
where $R$ is any number larger than $\frac{\|f\|_{H^{-1}}}{\nu \kappa_0}$ and $\kappa_0=2\pi\l_0 =2\pi/L$, which can be expressed in terms of the adimensional
Grashof number
\[
G:=\frac{\|f\|_{H^{-1}}}{\nu^2 \kappa_0^{1/2}}
\]
as $R > \nu \kappa_0^{-1/2} G$. Then for any Leray solution $u(t)$ there exists $t_0$, depending only on $\|u(0)\|_2$, such that
\[
u(t) \in B \qquad \forall t>t_0. 
\]
Then the evolutionary system consisting of Leray-Hopf solutions in the absorbing ball posseses a weak global attractor $\A$, which has the following structure \cite{FT,FMRT}:
\[
\A = \{u(0): u(\cdot) \text{ is a complete bounded  Leray-Hopf solution to the NSE}\}.
\]
The set $\A \subset B$ is the minimal $L^2$ weakly closed weakly attracting set, it is $L^2$-weak omega limit of $B$ (see \cite{C,CF}), and $\|u\|_2 \leq  \nu \kappa_0^{-1/2} G$ for all $u \in \A$. See also \cite{FRT} for topological properties of $\A$.

Let $u(t)$ be a trajectory on the global attractor $\A$. To bound the average determining wavenumber in terms of the Grashof number we use the energy inequality:
\[
\begin{split}
0\leq\|u(t_0+t)\|_2^2 &\leq \limsup_{t\to t_0+}\|u(t)\|_2^2 - 2\nu\int_{t_0}^{t_0+T} \|\nabla u(t)\|_2^2\, dt + 2\int_{t_0}^{t_0+T} (f,u)\, dt\\
&\leq  \nu^2 \kappa_0^{-1} G^2 - \nu\int_{t_0}^{t_0+T} \|\nabla u(t)\|_2^2\, dt + \frac{1}{\nu}\int_{t_0}^{t_0+T} \|f\|_{H^{-1}}^2\, dt.\\
\end{split}
\]
Therefore
\[
\begin{split}
\frac{1}{T}\int_{t_0}^{t_0+T} \|\nabla u(t)\|_2^2\, dt &\leq \frac{\nu G^2}{T\kappa_0} + \frac{\|f\|^2_{H^{-1}}}{\nu^2}\\
&\leq \frac{\nu G^2}{T\kappa_0} + \kappa_0\nu^2G^2.
\end{split}
\]
Then \eqref{eq:first-bound-on-average-lambda} imply
\begin{equation} \label{eq:bound-on-Lambda-G}
\<\lb\>-\l_0 \lesssim \frac{\<\|\nabla u\|_2^2\>}{\nu^2} \leq  \frac{G^2}{\nu T \kappa_0} + \kappa_0G^2.
\end{equation}
To take into account intermittency, we can use \eqref{eq:bound-on-lambda-using-intermit} instead of \eqref{eq:first-bound-on-average-lambda} to obtain
\[
\begin{split}
\<\lb\>-\l_0 &\lesssim_{r} \left(\frac{\varepsilon }{\nu^3} \right)^{\frac{1}{1+d(1-\frac{2}{r})}} \l_0^{-\frac{2d}{r+d(r-2)}}\\
&= \left(\frac{\l_0^{d(1-\frac{2}{r})}\<\|\nabla u\|_2^2\>   }{\nu^2} \right)^{\frac{1}{1+d(1-\frac{2}{r})}}\\ 
& \leq \kappa_0 \left(\frac{G^2}{\nu T\kappa_0^2 }  +  G^2     \right)^{\frac{1}{1+d(1-\frac{2}{r})}},
\end{split}
\]
where $d \in[0, 3)$ is the intermittency parameter from Section~\ref{sec:Kolmogorov}. In the case of extreme intermittency $d=0$ this bound is the same as \eqref{eq:bound-on-Lambda-G}, proportional to
$G^2$. However, in Kolmogorov's regime where $d=3$, the average determining number is bounded by $G^{1+}$. 
More precisely, 
\[
\begin{split}
\<\lb\>-\l_0 &\lesssim_\delta \kappa_0 \left(\frac{G^2}{\nu T\kappa_0^2 }  +  G^2     \right)^{\frac{1}{2} + \delta}, \qquad d=3,
\end{split}
\]
where $\delta =\frac{3-r}{4r-6} \to 0$ as $r \to 3-$.

\section{The case of energy equality}
\label{app}
In this section we consider the case where one of the solutions of the 3D Navier-Stokes equations satisfies the energy equality, e.g., a steady state or a solution belonging to
Onsager's space $L^3(0,\infty; B^{1/3}_{3,\infty})$.

\begin{Theorem}\label{thm-unique}
Let $v(t)$ be a weak solution of the 3D NSE satisfying the energy equality, and $Q(t)$ be such that $\lb_{v,r}(t)=\lambda_{Q(t)}$ for some $r\in(2,3)$.
If $u(t)$ is a Leray-Hopf weak solution such that
\begin{equation} \label{eq:dm-condition}
u(t)_{\leq Q(t)}=v(t)_{\leq Q(t)}, \qquad \forall t>0,
\end{equation}
then
\begin{equation}\notag
\lim_{t \to \infty} \|u(t) - v(t)\|_{L^2}=0.
\end{equation}
\end{Theorem}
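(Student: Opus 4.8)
The plan is to mimic the proof of Theorem~\ref{thm} in Section~\ref{sec:pf}, with one essential change. Set $w:=u-v$; by hypothesis $w_{\leq Q(t)}(t)\equiv 0$, where now $\lambda_{Q(t)}=\lb_{v,r}(t)$ depends on $v$ only. In Theorem~\ref{thm} the cutoff $Q$ dominated both $\lb_{u,r}$ and $\lb_{v,r}$, so that $u$ — and hence $w$ — was ``dissipative'' above $\lambda_Q$, and that is what let the term $I_2=-\sum_q\int \Delta_q(u\cdot\nabla w)w_q$ be closed using the definition of $\lb_{u,r}$. Here we have no control on $u$ above $\lambda_Q$, and substituting $u=v+w$ in $I_2$ produces a term cubic in $w$ that cannot be closed (we do not know that $w$ itself lies in a dissipative range). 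The whole point of the energy-equality hypothesis on $v$ is to eliminate that term, which I would do by replacing the ``$\Delta_q^2w$-testing'' estimate by a weak--strong-type energy identity.

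Concretely, I would write $\tfrac12\|w(t)\|_2^2=\tfrac12\|u(t)\|_2^2+\tfrac12\|v(t)\|_2^2-(u(t),v(t))$ and combine three facts: the energy \emph{inequality} for the Leray--Hopf solution $u$, the energy \emph{equality} for $v$, and the identity for $(u(t),v(t))-(u(t_0),v(t_0))$ obtained by pairing the equation for $u$ with $v$ and the equation for $v$ with $u$ and integrating in time. The force terms cancel, the viscous terms combine into $-\nu\int_{t_0}^t\|\nabla w\|_2^2\,d\tau$, and the nonlinear contribution is $\int_{t_0}^t\big(b(u,u,v)+b(v,v,u)\big)\,d\tau$, where $b(a,b,c):=\int_{\T^3}(a\cdot\nabla b)\cdot c\,dx$. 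Using $b(a,b,b)=0$ and $b(a,b,c)=-b(a,c,b)$ for divergence-free $a$, together with $u=v+w$, one gets the algebraic identity $b(u,u,v)+b(v,v,u)=b(w,w,v)=-\int_{\T^3}(w\cdot\nabla)v\cdot w\,dx$. Hence
\[
\tfrac12\|w(t)\|_2^2+\nu\int_{t_0}^t\|\nabla w\|_2^2\,d\tau\;\leq\;\tfrac12\|w(t_0)\|_2^2-\int_{t_0}^t\int_{\T^3}(w\cdot\nabla)v\cdot w\,dx\,d\tau,
\]
a bound in which only $v$ (never $u$) appears in the nonlinearity, and which is only quadratic in $w$.

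The remaining integral is exactly the term $I_1$ of Section~\ref{sec:pf}: I would apply Bony's paraproduct (Subsection~\ref{sec-para}), split into $I_{11}+I_{12}+I_{13}$, and use $w_{\leq Q}\equiv 0$ together with the definition of $\lb_{v,r}$ — which controls $\lambda_p^{-1+3/r}\|v_p\|_r$ for $p>Q$ and $\lambda_Q^{-1}\|v_{\leq Q}\|_\infty$ — to obtain
\[
\Big|\int_{t_0}^t\int_{\T^3}(w\cdot\nabla)v\cdot w\,dx\,d\tau\Big|\;\leq\;C\,c_r\,\nu\int_{t_0}^t\|\nabla w\|_2^2\,d\tau
\]
for an adimensional $C=C(r)$. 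Choosing $c_r:=1/(2C)$ absorbs this into the viscous term, and the Poincar\'e inequality $\|\nabla w\|_2\geq\kappa_0\|w\|_2$ (valid since $w$ has zero mean, $\kappa_0=2\pi/L$) yields $\|w(t)\|_2^2\leq\|w(t_0)\|_2^2\,e^{-\nu\kappa_0^2(t-t_0)}$; fixing any $t_0>0$ with $\|w(t_0)\|_2<\infty$ and letting $t\to\infty$ finishes the argument.

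The main obstacle is the \emph{rigorous} justification of the cross-term identity for $(u,v)$: pairing the $u$-equation with $v$ (and conversely) requires $v$ to be an admissible test function, which a generic weak solution is not, since the energy equality forces only regularity of the type $v\in L^3(0,\infty;B^{1/3}_{3,\infty})$ rather than $v\in W^{1,3}$, so $b(u,u,v)=-\int(u\cdot\nabla v)\cdot u$ must be interpreted with care. This is precisely where the energy-equality hypothesis enters: it, together with the commutator estimates of \cite{CCFS}, makes $v$ admissible and guarantees $b(v,v,v)=0$ (the energy flux vanishes); for a steady state, or any sufficiently regular $v$, the identity is immediate. Everything else is a line-by-line repetition of Section~\ref{sec:pf}, now invoking only the $\lb_{v,r}$-conditions.
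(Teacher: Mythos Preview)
Your proposal is correct and follows essentially the same route as the paper: the paper too expands $\tfrac12\|w\|_2^2$ via the energy equality for $v$, the energy inequality for $u$, and the cross-term identity for $(u,v)$ (citing \cite{T} for the latter) to arrive at exactly your inequality with only $-\int_{t_0}^t\int_{\T^3}(w\cdot\nabla v)\cdot w\,dx\,d\tau$ on the right, then invokes the $I_1$ estimate \eqref{est-i1} and Poincar\'e. Your discussion of why the $I_2$-term must be avoided and of the admissibility of $v$ as a test function is more explicit than the paper's, but the argument is the same.
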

\begin{proof}
We know that $v$ satisfies energy equality 
\begin{equation}\label{energy1}
\int_{\T^3}\frac12v^2\, dx+\nu \int_0^t\int_{\T^3} |\nabla v|^2\, dx\, d\tau=
\int_{\T^3}\frac12v_0^2\, dx+\int_0^t\int_{\T^3} fv\, dx\, d\tau;
\end{equation}
and $u$ satisfies energy inequality
\begin{equation}\label{energy2}
\int_{\T^3}\frac12u^2\, dx+\nu \int_0^t\int_{\T^3} |\nabla u|^2\, dx\, d\tau\leq
\int_{\T^3}\frac12u_0^2\, dx+\int_0^t\int_{\T^3} fu\, dx\, d\tau;
\end{equation}
Computing the energy of the difference $w:=u-v$
\begin{equation}\notag
\begin{split}
&\int_{\T^3}\frac12|w|^2\, dx+\nu \int_0^t\int_{\T^3} |\nabla w|^2\, dx\, d\tau\\
= & \int_{\T^3}\frac12u^2\, dx+\int_{\T^3}\frac12v^2\, dx
+\nu \int_0^t\int_{\T^3} |\nabla u|^2\, dx\, d\tau+\nu \int_0^t\int_{\T^3} |\nabla v|^2\, dx\, d\tau\\
&-\int_{\T^3}uv\, dx-2\nu \int_0^t\int_{\T^3} \nabla u\nabla v\, dx\, d\tau,
\end{split}
\end{equation}
combining (\ref{energy1}) and (\ref{energy2}) we conclude
\begin{equation}\label{energy3}
\begin{split}
&\int_{\T^3}\frac12|w|^2\, dx+\nu \int_0^t\int_{\T^3} |\nabla w|^2\, dx\, d\tau\\
\leq & \int_{\T^3}\frac12(u_0^2+v_0^2)\, dx+\int_0^t\int_{\T^3} f(u+v)\, dx\, d\tau\\
&-\int_{\T^3}uv\, dx-2\nu \int_0^t\int_{\T^3} \nabla u\nabla v\, dx\, d\tau.
\end{split}
\end{equation}
As for the $L^2$ inner product of $u$ and $v$, we have (see, e.g., \cite{T}) 
\begin{equation}\label{energy4}
\begin{split}
&  \int_{\T^3}u_0v_0\, dx-\int_{\T^3}uv\, dx-2\nu \int_0^t\int_{\T^3} \nabla u\nabla v\, dx\, d\tau\\
%=&-\int_0^t\int_{\T^3}u\partial_tv\,dx\, d\tau-\int_0^t\int_{\T^3}u\cdot\nabla vu\,dx\, d\tau-\int_0^t\int_{\T^3}fv\,dx\, d\tau\\
=&-\int_0^t\int_{\T^3}w\cdot\nabla v w\,dx\, d\tau-\int_0^t\int_{\T^3} f(u+v)\, dx\, d\tau.
%+\nu \int_0^t\int_{\T^3} \nabla u\nabla v\, dx\, d\tau.
\end{split}
\end{equation}
Combining (\ref{energy3}) and (\ref{energy4}) yields
\begin{equation}\notag
\begin{split}
\int_{\T^3}\frac12|w|^2\, dx+\nu \int_0^t\int_{\T^3} |\nabla w|^2\, dx\, d\tau
\leq  \int_{\T^3}\frac12|w(0)|^2\, dx+\int_0^t\int_{\T^3}|w\cdot\nabla v w|\,dx\, d\tau.
\end{split}
\end{equation}
It then follows from estimate \eqref{est-i1} that for a small constant $c_r$
\begin{equation}\notag
\begin{split}
\int_{\T^3}|w|^2\, dx+\nu \int_0^t\int_{\T^3} |\nabla w|^2\, dx\, d\tau
\leq  \int_{\T^3}|w(0)|^2\, dx.
\end{split}
\end{equation}
Applying Poincar\'e's inequality, we have
\begin{equation}\notag
\begin{split}
\int_{\T^3}|w|^2\, dx \leq  \int_{\T^3}|w(0)|^2\, dx-\kappa_0\nu \int_0^t\int_{\T^3} |w|^2\, dx\, d\tau.
\end{split}
\end{equation}
Thus
\[
\|w(t)\|_2^2\leq \|w(0)\|_2^2  e^{-\kappa_0\nu t}.
\]
\end{proof}

\section*{Acknowledgment}
The authors thank the anonymous referee for careful reading the manuscript and constructive comments.

%\Endrefs
\end{document}